\theoremstyle{plain}
\newtheorem{claim}{Claim}
\newtheorem{prop}[claim]{Proposition}
\newtheorem{thm}[claim]{Theorem}
\newtheorem{prob}[claim]{Problem}
\newcommand{\eps}{\varepsilon}
\newcommand{\congg}{\cong_{(1,1)}}
\newcommand{\conggg}{\cong_{(3)}}
\newcommand{\Q}{\mathcal{Q}}
\newcommand{\N}{\mathbb{N}}
\newcommand{\X}{\mathcal{X}}
\newcommand{\choq}{\mathbb{K}_{\mathrm{Choq}}}
\newcommand{\perm}{\cong_{\mathrm{perm}}}
\newcommand{\affine}{\approx_a}
\newcommand{\zero}{\vec{0}}
\newcommand{\Aa}{\mathcal{A}}
\newcommand{\Cc}{\mathcal{C}}
\newcommand{\dd}{d}
\newcommand{\Xx}{\mathbb{X}}
\title[Completeness of the homeomorphism relation\ldots]{Completeness of the homeomorphism relation of locally connected continua}
\author{Tomasz Cie\'{s}la}
\address{Institute of Mathematics, Faculty of Mathematics, Informatics and Mechanics, University of Warsaw}
\email{t.ciesla@mimuw.edu.pl}
\begin{document}

\maketitle

\begin{abstract}
In this paper we prove that the homeomorphism relation of locally connected continua is a complete orbit equivalence relation. This answers a question posed by Chang and Gao in \cite{changgao}.
\end{abstract}

\section{Introduction}

A Borel action $a$ of a Polish group $G$ on a standard Borel space $X$ determines an equivalence relation $E_a$ given by $x E_a y \iff \exists g \in G \ g x=y$. In other words, $xE_ay$ if and only if $x$ and $y$ are in the same orbit of the action $a$. Such relations are called \textit{orbit equivalence relations}. Note that every orbit equivalence relation is analytic, i.e. the set $\{ (x,y) \in X \times X \colon xE_ay\}$ is an analytic subset of the product $X \times X$.

Given two orbit equivalence relations $E$ and $F$ on standard Borel spaces $X$ and $Y$, respectively, we say that a Borel map $f \colon X \to Y$ \textit{reduces $E$ to $F$} if and only if for every $x,y\in X$
$$x E y \iff f(x) F f(y).$$
If this is the case we say that $E$ is \textit{Borel reducible} to $F$.

If $E$ is Borel reducible to $F$ and $F$ is Borel reducible to $E$ then we say that $E$ and $F$ are \textit{Borel bireducible}. Roughly speaking, this means that $E$ and $F$ are of the same complexity.

If $E$ is an orbit equivalence relation such that every orbit equivalence relation $F$ is reducible to $E$ then we say that $E$ is \textit{complete} (or \textit{universal}) orbit equivalence relation. Complete orbit equivalence relations are, in a sense, the most complex objects in the class of orbit equivalence relations. It is known that complete orbit equivalence relations exist, on abstract grounds. This follows from the existence of universal Polish groups and the Mackey-Hjorth theorem \cite{gao}[Theorem 3.5.2] on extensions of actions of Polish groups. On the other hand, the first natural example of a complete orbit equivalence relation is the isometry relation of Polish metric spaces as proved by Gao and Kechris \cite{gaokechris} and Clemens \cite{clemens}. Interestingly enough, recently Melleray \cite{melleray} proved that there exists a Polish metric space whose group of isometries with its natural action on the space induces a complete orbit equivalence relation.

In recent years there has been a considerable amount of research on the classification program of separable C*-algebras from a descriptive set-theoretical point of view. This began with the work of Farah, Toms and Tornquist \cite{farah} and later Elliott, Farah, Paulsen, Rosendal, Toms and Tornquist \cite{elliot} and led to the question of the complexity of the isometry relation of separable C*-algebras. This problem has been solved by Sabok \cite{sabok} who showed that the isometry relation of separable C*-algebras is a complete orbit equivalence relation. Soon thereafter, Zielinski \cite{zielinski}, using Sabok’s result, solved the long-standing problem whether the homeomorphism relation of compact metric spaces is a complete orbit equivalence relation. The latter result was subsequently improved by Chang and Gao \cite{changgao} who showed that the homeomorphism relation of continua (connected compact metric spaces) is also a complete orbit equivalence relation.

These results lead to a number of open questions.

\begin{prob}[Zielinski, \cite{zielinski}]
Is the homeomorphism relation of homogeneous compact metric spaces a complete orbit equivalence relation?
\end{prob}

This problem seems to be very difficult as there are not so many known ways to construct homogeneous spaces.

\begin{prob}[Chang, Gao, \cite{changgao}]\label{prob-changgao}
Is the homeomorphism relation of locally connected continua a complete orbit equivalence relation?
\end{prob}


In this paper we prove that the answer to Problem \ref{prob-changgao} is affirmative.


\begin{thm} \label{main-result}
The homeomorphism relation of locally connected continua is a complete orbit equivalence relation.
\end{thm}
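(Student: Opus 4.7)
The plan is to give a Borel reduction from the homeomorphism relation of arbitrary continua, which is complete by the Chang--Gao theorem, to the homeomorphism relation of locally connected continua. For each continuum $X$ (coded, say, as a closed subset of the Hilbert cube), one builds in a Borel way a locally connected continuum $L(X)$ with the property that $X\cong Y$ if and only if $L(X)\cong L(Y)$. Since Chang and Gao have already shown that the homeomorphism relation of continua is complete, producing such an $L$ is enough.

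The construction of $L(X)$ is the heart of the argument. Fix a countable dense sequence $(x_n)$ in $X$ and attach to $X$ a carefully chosen countable system of arcs joining pairs $x_i, x_j$ whose distances tend to zero, in such a way that the diameters of the attached arcs also tend to zero. A Hahn--Mazurkiewicz-type verification then shows that the resulting compact metric space is locally connected: every point has arbitrarily small connected neighborhoods because the arcs fill in the gaps that destroy local connectedness of $X$. To make the gluing canonical up to homeomorphism, each attached arc should be decorated with a small rigid gadget --- a topologically distinguished sub-continuum, e.g. a specific Peano continuum of controlled symmetry type --- so that any self-homeomorphism of $L(X)$ must preserve the set of attached decorated arcs set-wise. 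This in turn forces the original continuum $X$ to be recoverable from $L(X)$ as the topologically distinguished subset obtained by removing the interiors of the decorated arcs.

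With recoverability and the canonical-up-to-homeomorphism nature of the attachment in hand, any homeomorphism $L(X)\to L(Y)$ permutes the decorated arcs and descends to a homeomorphism between the distinguished subsets, which are topological copies of $X$ and $Y$; conversely, a homeomorphism $X\to Y$ extends uniquely to $L(X)\to L(Y)$ because the attachment pattern is determined by the metric data on a dense sequence, which can be transported along the homeomorphism. Borelness of $X\mapsto L(X)$ then follows by expressing the construction as a measurable choice of the dense sequence together with a countable sequence of arc-attachments, each being a Borel operation on the space of closed subsets of the Hilbert cube.

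The main obstacle is the tension between local connectedness and recoverability. Local connectedness requires arcs to be attached densely near every point of $X$, which tends to blur the topological distinction between $X$ and the attached skeleton; recoverability, on the other hand, requires that the decorated arcs remain topologically distinguishable from ``generic'' points of $X$ even after all these attachments. The delicate work lies in choosing the pattern of attachments and the decorating gadgets so that both properties hold simultaneously, and in checking that nothing in $X$ (whose local structure is arbitrary) can be confused with a point lying on an attached arc. Once this balance is struck and the assignment is verified to be Borel, the theorem follows from Chang and Gao's completeness result for continua.
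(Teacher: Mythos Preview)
Your proposal identifies the right target --- produce a Borel map into locally connected continua that preserves and reflects homeomorphism --- but the plan has a genuine gap precisely at the point you flag as ``the main obstacle,'' and the paper's proof works around that obstacle by taking a different route rather than overcoming it.

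The problem is recoverability. You want to decorate the attached arcs so that they are topologically distinguishable from $X$, but $X$ is an \emph{arbitrary} continuum: whatever Peano gadget you choose, some continuum $X$ already contains copies of it (indeed, every Peano continuum embeds in many non-locally-connected continua). So there is no fixed gadget that is guaranteed to be absent from $X$, and hence no way to single out ``the attached arcs'' inside $L(X)$ by a purely topological description. Your forward direction also has a hole: the attachment pattern is determined by metric distances among the $x_n$, but a homeomorphism $X\to Y$ need not preserve any metric information, so you would first have to prove that $L(X)$ is independent (up to homeomorphism) of the metric and the dense sequence --- a nontrivial claim you do not address.

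The paper avoids both difficulties by not reducing from arbitrary continua at all. It reduces from affine homeomorphism of Choquet simplices (Sabok's completeness result), passing through Zielinski's relations $\conggg$, $\perm$, $\congg$. The crucial point is that the base space of the final construction is the \emph{fixed} space $\X\subset\Q^2$, not the input simplex; the input enters only through a countable family $\mathcal F_X$ of \emph{convex} closed subsets. Over each such convex set the paper erects a cone and attaches two or three short segments at the apex, producing points whose removal leaves exactly three or four components. Because $\X$ is fixed and its local structure is known, one can verify that the apexes are the \emph{only} such points; their closures then recover $\Q$ and $\Xi(\Psi(X))$, and Zielinski's machinery finishes the argument. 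Convexity is what makes the cones compact and the resulting space locally connected, and the affine hypothesis on the input homeomorphism is what makes the forward direction go through. None of this structure is available when the input is an arbitrary continuum.
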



Recall that every compact metric space embeds in the Hilbert cube $\Q=[0,1]^{\N}$ and the family $K(\Q)$ of all compact subsets of $\Q$ has a natural Borel structure steming from the Vietoris topology. Kuratowski proved that the set of locally connected subcontinua of $\Q$ is an $F_{\sigma\delta}$ subset of $K(\Q)$ \cite{kuratowski}. This gives a Borel structure on the collection of locally connected continua. It is worth noting that local connectedness and local path-connectedness are equivalent in the class of continua \cite{hocking}. Such continua are also called Peano continua, as they are continuous images of the interval.
The group of autohomeomorphisms of $\Q$ acts on the collection of locally connected continua in an obvious way. The homeomorphism relation of locally connected continua is an orbit equivalence relation defined in the following way: $X \sim Y$ if and only if there exists a homeomorphism $f \colon \Q \to \Q$ with $f[X]=Y$.

Section \ref{coding-spaces} is devoted to a description of coding spaces. They are used in the last section in which we prove Theorem \ref{main-result}.

\section{The coding spaces} \label{coding-spaces}

Let $\dd$ be a metric on $\Q$ given by the formula $\dd((x_n)_{n\in\N}, (y_n)_{n\in\N})=\sum_{n\in\N}\frac{|x_n-y_n|}{2^n}$. Let $\dd'$ be a metric on $\Q \times \Q$ given by $\dd'((x,y),(z,t))=\dd(x,z)+\dd(y,t)$. We also denote $\zero=(0,0,0,\ldots)\in\Q$ and $e_i=(\underbrace{0,0,\ldots,0}_{i \text{ times }},1,0,0,\ldots)\in\Q$.

In this section we consider locally connected continua $X,Y \subset \Q$ and non-empty families (finite or countably infinite) $\Aa = \{A_n \colon n<|\Aa| \}, \Cc=\{C_n \colon n<|\Cc|\}$ of non-empty closed convex subsets of $\Q$ such that $\bigcup \Aa$ is a closed subset of $X$ and $\bigcup \Cc$ is a closed subset of $Y$.


For every $A \in \Aa$ let $a_0^A, a_1^A, \ldots$ be an enumeration of a dense subset of $A$ in which every element appears infinitely many times. Define $b_k^A = (a_k^A, e_{\langle n,k\rangle}) \in X \times \Q$, where $n$ is such that $A=A_n$ and $\langle \cdot,\cdot\rangle$ is a bijection between $\N \times \N$ and $\N$. 

We define 
$$X'=X \times \{\zero\} \cup \{ b_k^A \colon A \in \Aa, \ k \in \N\}.$$ 
The idea is that for every $A\in\Aa$ we introduce a set of new isolated points whose boundary is precisely the set $A$.
Note that $X'$ is a compact space. 

Similarly, for every $C\in\Cc$ we consider an enumeration with infinite repetitions $c_0^C, c_1^C, \ldots$ of a dense subset of $C$ and we define $d_k^C=(c_k^C, e_{\langle n,k\rangle})\in Y \times \Q$, where $n$ is such that $C=C_n$. We define $$Y'=Y \times \{\zero\} \cup \{ d_k^A \colon A \in \Aa, \ k \in \N\}.$$ 


A standard back-and-forth construction yields the following

\begin{prop}\label{extends}
If $f \colon X \to Y$ is a homeomorphism such that $\{f[A] \colon A \in \Aa\} = \Cc$ then there is a homeomorphism $g \colon X' \to Y'$ extending $f$ such that $g[\{b_k^A \colon k \in \N\}] = \{d_k^{f[A]} \colon k \in \N\}$ for every $A\in\Aa$.
\end{prop}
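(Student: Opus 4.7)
My plan is to construct, by back-and-forth, a bijection $\pi_A \colon \N \to \N$ for each $A \in \Aa$, and then to define $g \colon X' \to Y'$ by $g(x, \zero) = (f(x), \zero)$ and $g(b_k^A) = d_{\pi_A(k)}^{f[A]}$. Since $\{f[A] : A \in \Aa\} = \Cc$ and $f$ is a bijection, the map $A \mapsto f[A]$ is a bijection from $\Aa$ onto $\Cc$, and so bijectivity of $g$ follows from the bijectivity of each $\pi_A$. Because $X'$ and $Y'$ are compact metric, a continuous bijection between them is automatically a homeomorphism, so the real content is to choose the $\pi_A$'s so that $g$ is continuous.

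Fix $A \in \Aa$ and let $n$ be its index in the enumeration of $\Aa$. I construct $\pi_A$ by the standard back-and-forth, alternating between a forward stage (assign the least unassigned $k$ to some fresh $k'$) and a backward stage (dual). The closeness requirement enforced throughout is
\[
\dd\bigl(f(a_k^A),\, c_{\pi_A(k)}^{f[A]}\bigr) < 2^{-\langle n, k\rangle},
\]
together with its symmetric counterpart when extending $\pi_A^{-1}$. At each step, density of $(a_k^A)_k$ in $A$ and of $(c_k^{f[A]})_k$ in $f[A]$, combined with the fact that every element of these dense sets is enumerated infinitely often, guarantees that an admissible unused partner exists.

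Only the continuity of $g$ remains. The points $b_k^A$ are isolated in $X'$, so continuity is automatic there; at a point $(x, \zero) \in X \times \{\zero\}$ it suffices, by subsequencing, to check sequences of a single type. For $p_j \in X \times \{\zero\}$ converging to $(x, \zero)$, continuity of $f$ does it. For $p_j = b_{k_j}^{A_{n_j}}$ converging to $(x, \zero)$, one reads off $a_{k_j}^{A_{n_j}} \to x$ in $\Q$ and $\langle n_j, k_j\rangle \to \infty$. Writing $g(p_j) = \bigl(c_{\pi_{A_{n_j}}(k_j)}^{f[A_{n_j}]},\, e_{\langle m_j,\, \pi_{A_{n_j}}(k_j)\rangle}\bigr)$, where $m_j$ is the index of $f[A_{n_j}]$ in $\Cc$, the first coordinate tends to $f(x)$ by the triangle inequality, continuity of $f$, and the bound $2^{-\langle n_j, k_j\rangle} \to 0$; the second coordinate tends to $\zero$ because $\langle m_j, \pi_{A_{n_j}}(k_j)\rangle \to \infty$, which one checks case-by-case: if the $n_j$ are eventually constant then $k_j \to \infty$ and hence $\pi_A(k_j) \to \infty$ (bijections of $\N$ preserve divergence), while if the $n_j$ take infinitely many distinct values then so do the $m_j$, via the bijection $A \mapsto f[A]$.

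The main obstacle is precisely the subtlety just flagged: when $n_j$ is unbounded, $k_j$ need not tend to infinity, so a naive bound like $2^{-k}$ in the back-and-forth would fail to control first-coordinate convergence for sequences of $b$-points that straddle infinitely many different $A$'s. The strengthening to $2^{-\langle n, k\rangle}$ — still compatible with back-and-forth because it remains a condition that density plus infinite repetitions can satisfy — is exactly what is needed.
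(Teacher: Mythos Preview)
Your back-and-forth is precisely what the paper has in mind (it writes only ``a standard back-and-forth construction yields the following'' and gives no details), and the overall architecture is right. Two points in the continuity check need repair, though.

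First, the bound $\dd\bigl(f(a_k^A),c_{\pi_A(k)}^{f[A]}\bigr)<2^{-\langle n,k\rangle}$ can only be enforced at a \emph{forward} step; at a backward step (where $k'$ is prescribed and you must choose an unassigned $k$) the target $f^{-1}(c_{k'}^{f[A]})$ need not belong to the countable dense set, so for each fixed value $a$ in that set the distance $\dd(f(a),c_{k'}^{f[A]})$ is a fixed positive number, eventually exceeding $2^{-\langle n,k\rangle}$ along the infinitely many $k$ with $a_k^A=a$. What you \emph{can} enforce at the backward step is the symmetric bound $2^{-\langle m,k'\rangle}$, so in fact only
\[
\dd\bigl(f(a_{k_j}^{A_{n_j}}),c_{l_j}^{C_{m_j}}\bigr)<\max\bigl(2^{-\langle n_j,k_j\rangle},\,2^{-\langle m_j,l_j\rangle}\bigr)
\]
is available, and for this to tend to $0$ you also need $\langle m_j,l_j\rangle\to\infty$. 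In other words, establish the second-coordinate claim first and then cite it in the first-coordinate argument.

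Second, the case split for $\langle m_j,l_j\rangle\to\infty$ is neither exhaustive (the $n_j$ may oscillate among finitely many values without being eventually constant) nor complete in the second case (``$m_j$ takes infinitely many distinct values'' does not by itself force $\langle m_j,l_j\rangle\to\infty$). The clean argument avoids cases: if some subsequence of $\langle m_j,l_j\rangle$ were bounded, pass to a further subsequence on which $(m_j,l_j)=(m,l)$ is constant; then $n_j$ is the constant $n$ with $f[A_n]=C_m$ (since $A\mapsto f[A]$ is a bijection) and $k_j=\pi_{A_n}^{-1}(l)$ is constant, contradicting $\langle n_j,k_j\rangle\to\infty$.
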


For every $A\in\Aa$ and $k\in\N$ we define $X_k^A = \{t\cdot x + (1-t) b_k^A \colon 0\le t \le 1, \ x \in A \times \{\zero\}\}$, i.e. $X_k^A$ is the cone with base $A \times \{ \zero \}$ and apex $b_k^A$. Further, let 
$$X'' = X \times \{\zero\} \cup \bigcup_{A \in \Aa} \bigcup_{k \in \N} X_k^A.$$
That is, for every $A \in \Aa$ we build a sequence of cones with base $A$ such that the boundary of the set of apexes of these cones is $A$. Moreover, every two cones have no common points lying outside $X \times \{\zero\}$.

Note that $X''$ is a locally connected continuum. Compactness follows from the assumption that all sets $A\in\Aa$ are convex.


It is clear that $X''$ is connected, so $X''$ is a continuum. It is also locally connected. Indeed, local connectedness at points outside $X \times \{ \zero \}$ is clear as these points have arbitrarily small convex neighbourhoods. For the proof of local connectednes at points of the form $(x,\zero)\in X \times \{ \zero \}$ we note that any basic neighbourhood $U \times V \subset X \times \Q$ of $(x,\zero)$ with $U\ni x$ being connected is connected. Indeed, since every set $A\in \Aa$ is convex and the apex of every cone $X_n^A$ lies above $A$ it follows that for every point $(y,s)\in X_n^A$ the segment with endpoints $(y,s), (y,\zero)$ is a subset of $X_n^A$. It follows that the set $U\times V$ has the same property as well. Therefore each point $(y,s)\in U \times V$ lies in the same component of $U \times V$ as $(y,\zero)$. But all points of the form $(y,\zero)$ lie in the same component due to the assumption that $U$ is connected. It follows that $U \times V$ is connected.



We use the same notation for $Y$, and for every $C \in \Cc$ and $k\in\N$ let $Y_k^C$ be the cone with base $C \times \{\zero\}$ and apex $d_k^C$ and let $Y''$ be the union of $Y\times \{\zero\}$ and all the cones $Y_k^C$.

\begin{prop}\label{extendss}
If $f \colon X \to Y$ is a homeomorphism such that $\{f[A] \colon A \in \Aa\} = \Cc$ and for every $A\in\Aa$ the restriction $f|_A$ is affine then there is a homeomorphism $h \colon X'' \to Y''$ extending $f$ such that for every $A \in \Aa$ $f[\{b_k^A \colon k \in \N\}] = \{d_k^{f[A]} \colon k \in \N\}$ and for every $A\in\Aa$ and $k\in\N$ the restriction of $h$ to $X_k^A$ is affine.
\end{prop}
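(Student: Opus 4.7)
The plan is to invoke Proposition \ref{extends} first, producing a homeomorphism $g\colon X'\to Y'$ extending $f$ together with a bijection $\sigma_A\colon\N\to\N$ for each $A\in\Aa$ satisfying $g(b_k^A)=d_{\sigma_A(k)}^{f[A]}$. I will then extend $g$ to $h\colon X''\to Y''$ by declaring $h$ to be affine on every cone:
$$h\bigl(t\cdot(a,\zero)+(1-t)\,b_k^A\bigr):=t\cdot(f(a),\zero)+(1-t)\,d_{\sigma_A(k)}^{f[A]}$$
for $a\in A$, $t\in[0,1]$, $A\in\Aa$, $k\in\N$, and $h(x,\zero)=(f(x),\zero)$ on $X\times\{\zero\}$. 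With this definition, the required conditions (extension of $f$, matching of apex sets, and affinity of each $h|_{X_k^A}$) are all built in, provided $h$ is a well-defined homeomorphism.

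Well-definedness and bijectivity should be straightforward. Any two cones $X_k^A$ and $X_{k'}^{A'}$ with $(A,k)\ne(A',k')$ meet only on $X\times\{\zero\}$, since the second coordinates of their interiors are positive multiples of distinct standard basis vectors $e_{\langle n,k\rangle}$ and $e_{\langle n',k'\rangle}$. On the common base $A\times\{\zero\}$ the affinity of $f|_A$ makes both formulas coincide with $(a,\zero)\mapsto(f(a),\zero)$, so $h$ is well defined; bijectivity then follows from the bijections $\sigma_A$ together with the index correspondence between $\Aa$ and $\Cc$ induced by $f$.

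The main obstacle is continuity at points of $X\times\{\zero\}$; inside each cone, $h$ is affine and hence continuous. Consider a sequence $(p_m,s_m)\to(x,\zero)$ with each $(p_m,s_m)\in X_{k_m}^{A_m}\setminus(X\times\{\zero\})$, written as $(p_m,s_m)=(t_m a_m+(1-t_m)a_{k_m}^{A_m},\,(1-t_m)e_{\langle n'_m,k_m\rangle})$ for some $a_m\in A_m$, and letting $n'_m,n''_m$ index $A_m\in\Aa$ and $f[A_m]\in\Cc$. The affinity of $f|_{A_m}$ gives that the first coordinate of $h(p_m,s_m)$ is simply $f(p_m)\to f(x)$. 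For the second coordinate $(1-t_m)\,e_{\langle n''_m,\sigma_{A_m}(k_m)\rangle}$, the case $(1-t_m)\to 0$ is trivial, so assume otherwise along a subsequence; then $s_m\to\zero$ forces $\langle n'_m,k_m\rangle\to\infty$, meaning the apexes $b_{k_m}^{A_m}\in X'$ have second coordinates tending to $\zero$ and therefore accumulate only on $X\times\{\zero\}$. The continuity of $g$ then forces $g(b_{k_m}^{A_m})=d_{\sigma_{A_m}(k_m)}^{f[A_m]}$ to accumulate only on $Y\times\{\zero\}$, i.e.\ $\langle n''_m,\sigma_{A_m}(k_m)\rangle\to\infty$, so the second coordinate of $h(p_m,s_m)$ also tends to $\zero$. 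Thus $h$ is a continuous bijection between a compact space and a Hausdorff space, hence a homeomorphism.
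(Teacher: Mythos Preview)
Your construction of $h$ and the overall strategy are the same as the paper's, and your handling of well-definedness, bijectivity, and the second coordinate in the continuity argument is fine. There is, however, a genuine error in the sentence ``The affinity of $f|_{A_m}$ gives that the first coordinate of $h(p_m,s_m)$ is simply $f(p_m)$.'' Writing $d_{\sigma_{A_m}(k_m)}^{f[A_m]} = (c_{\sigma_{A_m}(k_m)}^{f[A_m]},\,e_{\langle n''_m,\sigma_{A_m}(k_m)\rangle})$, the first coordinate of $h(p_m,s_m)$ is
\[
t_m f(a_m) + (1-t_m)\,c_{\sigma_{A_m}(k_m)}^{f[A_m]},
\]
whereas affinity of $f|_{A_m}$ only gives $f(p_m)=t_m f(a_m)+(1-t_m)f(a_{k_m}^{A_m})$. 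These differ by $(1-t_m)\bigl(c_{\sigma_{A_m}(k_m)}^{f[A_m]}-f(a_{k_m}^{A_m})\bigr)$, and there is no reason for the dense enumeration $(c_l^C)$ of $C=f[A_m]$ to satisfy $c_{\sigma_{A_m}(k_m)}^{C}=f(a_{k_m}^{A_m})$; the enumerations were chosen independently, and the bijection $\sigma_{A_m}$ produced by the back-and-forth in Proposition~\ref{extends} need not respect any such correspondence.

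The fix is exactly the mechanism you already use for the second coordinate. When $(1-t_m)\to 0$ the discrepancy vanishes since the $c$'s lie in the bounded set $\Q$. When $(1-t_m)\not\to 0$ along a subsequence, you have $\langle n'_m,k_m\rangle\to\infty$, hence $\dd'(b_{k_m}^{A_m},(a_{k_m}^{A_m},\zero))=2^{-\langle n'_m,k_m\rangle}\to 0$; uniform continuity of $g$ on the compact $X'$ then forces $\dd'\bigl(d_{\sigma_{A_m}(k_m)}^{f[A_m]},(f(a_{k_m}^{A_m}),\zero)\bigr)\to 0$, which in particular gives $\dd\bigl(c_{\sigma_{A_m}(k_m)}^{f[A_m]},f(a_{k_m}^{A_m})\bigr)\to 0$. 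This is precisely the estimate the paper isolates explicitly in its $\eps$--$N$ argument. Once you insert this step, your proof goes through and coincides with the paper's.
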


\begin{proof}
Let $g\colon X' \to Y'$ be a homeomorphism extending $f$ constructed in the previous proposition. We define $h \colon X'' \to Y''$ by $h|_{X'}=g$, and for every $A \in \Aa$, $x \in A$, $k \in \N$, $0<t<1$
$$h(t\cdot(x,\zero) + (1-t) \cdot b_k^A) = t\cdot g(x,\zero) + (1-t)\cdot g(b_k^A).$$ 

This map is a bijection between compact spaces, so to prove that $h$ is a homeomorphism we only have to show that $h$ is continuous. It is also clear from the definition of $h$ that $h|_{X_k^A}$ is affine for every $A \in \Aa$ and $k \in \N$.


Let $t_j(x_j,\zero)+(1-t_j)b_{k_j}^{A_{n_j}}$, where $k_j, n_j \in \N, x_j \in A_{n_j}, t_j\in[0,1]$ be a sequence of elements of the domain of $h$ converging to some $t(x,0)+(1-t)b_k^{A_n}$ (where $x\in A_n, t\in[0,1], k,n\in\N$). If $t<1$ then for sufficiently large $j$ we have $k_j=k$, $n_j=n$ and also $\displaystyle\lim_{j\to\infty} x_j=x$, $\displaystyle \lim_{j\to\infty} t_j=t$. Therefore

\begin{multline*}
h(t_j(x_j,\zero)+(1-t_j)b_{k_j}^{A_{n_j}}) = t_jg(x_j,\zero)+(1-t_j)g(b_{k_j}^{A_{n_j}}) \\
\xrightarrow{j\to\infty} tg(x,\zero)+(1-t)g(b_k^{A_n}) = h(t(x,\zero)+(1-t)b_k^{A_n}).
\end{multline*}

If $t=1$ then $t_j(x_j,\zero)+(1-t_j)b_{k_j}^{A_{n_j}} \xrightarrow{j\to\infty} (x,\zero)$. Fix $\eps>0$. Pick an integer $N$ so large that if $\langle n,k\rangle > N$ and $g(b_k^{A_n})=d_l^{C_m}$ then $\dd(c_l^{C_m},f(a_k^{A_n}))<\eps/4$ and $2^{-\langle m,l\rangle}<\eps/4$. Pick an integer $N'$ such that whenever $j>N'$ then $(1-t_j)2^{-\langle n_j,k_j\rangle} < \eps2^{-N-2}$ and $\dd(f(x),t_jf(x_j)+(1-t_j)f(a_{k_j}^{A_{n_j}}))<\eps/4$.

We have $h(x,\zero)=g(x,\zero)=(f(x),\zero)$ and 

\begin{align*}
h(t_j(x_j,\zero)+(1-t_j)b_{k_j}^{A_{n_j}}) 
& = t_jg(x_j,\zero)+(1-t_j)g(b_{k_j}^{A_{n_j}}) \\
& = (t_jf(x_j)+(1-t_j)f(a_{k_j}^{A_{n_j}}),\zero) \\
& \quad + (1-t_j)(c_{l_j}^{C_{m_j}}-f(a_{k_j}^{A_{n_j}}), e_{\langle m_j, l_j \rangle}),
\end{align*}
therefore, if $j>N'$ then

\begin{align*}
\dd'(h(x,\zero), h & (t_j(x_j,\zero)+(1-t_j)b_{k_j}^{A_{n_j}})) \\
& \le \dd(f(x),t_jf(x_j)+(1-t_j)f(a_{k_j}^{A_{n_j}})) \\
& \quad + (1-t_j)\dd(c_{l_j}^{C_{m_j}},f(a_{k_j}^{A_{n_j}}))+(1-t_j)2^{-\langle m_j, l_j\rangle} \\
& \le \eps/4 + (1-t_j)\dd(c_{l_j}^{C_{m_j}},f(a_{k_j}^{A_{n_j}}))+(1-t_j)2^{-\langle m_j, l_j\rangle} = (*).
\end{align*}

If $1-t_j\ge \eps/4$ then $\eps 2^{-\langle n_j,k_j\rangle-2} \le (1-t_j)2^{-\langle n_j,k_j\rangle} < \eps2^{-N-2}$, i.e. $\langle n_j,k_j\rangle>N$, so $\dd(c_l^{C_m},f(a_k^{A_n}))<\eps/4$ and it follows that $(*) < \eps/4 + (1-t_j)\eps/4 + \eps2^{-N-2} \le \eps$.

Otherwise $1-t_j<\eps/4$ and $(*) \le \eps/4 + \eps/4\cdot \sup \dd' + \eps/4 = \eps$. 

It follows that $h$ is a continuous function. This finishes the proof. 

\end{proof}

Now, for every $k\in \N$ and $A\in\Aa$ let $\hat b_k^A$ and $\tilde b_k^A$ be two distinct points in $\Q \times \Q$ such that $\dd'(b_k^A, \hat b_k^A)=\dd'(b_k^A, \tilde b_k^A)=\frac 1{2+\langle n,k \rangle}$, where $n$ is such that $A_n=A$, and $\hat b_k^A - (\zero,e_{\langle n,k\rangle}), \tilde b_k^A - (\zero,e_{\langle n,k \rangle}) \in \Q \times \{\zero\}$. We denote $\hat I_k^A = \{ tb_k^A +(1-t)\hat b_k^A \colon 0\le t \le 1\}$ and $\tilde I_k^A = \{ tb_k^A +(1-t)\tilde b_k^A \colon 0\le t \le 1\}$. We define

$$T(X,\Aa)=X'' \cup \bigcup_{A\in\Aa} \bigcup_{k \in \N} \hat I_k^A \cup \tilde I_k^A.$$
In other words, we consider the space $X''$ and for every $k \in \N$ and $A \in \Aa$ we attach two short segments $\hat I_k^A$ and $\tilde I_k^A$ to the apex of $X_k^A$. The key property of points $b_k^A$ in $T(X,\Aa)$ is that $T(X,\Aa)\setminus \{b_k^A\}$ consists of three connected components.

Note that $T(X,\Aa)$ is a locally connected continuum. Compactness of $T(X,\Aa)$ is proved similarly as of $X''$. It is clear that $T(X,\Aa)$ is connected. Local connectedness of $T(X,\Aa)$ easily follows from local connectedness of $X''$.

We define similarly $\hat d_k^C$ and $\tilde d_k^C$ as points at the distance $1/(2+\langle n,k \rangle)$ from $d_k^C$, where $C=C_n$, we denote the segment with endpoints $d_k^C, \hat d_k^C$ as $\hat I_k^C$ and the segment with endpoints $d_k^C, \tilde d_k^C$ as $\tilde I_k^C$. We define $T(Y,\Cc)$ as the union of $Y''$ and all the segments $\hat I_k^C$, $\tilde I_k^C$.

\begin{prop}\label{extendsss}
If $f \colon X \to Y$ is a homeomorphism such that $\{f[A] \colon A \in \Aa\} = \Cc$ and for every $A\in \Aa$ the restriction $f|_A$ is affine then there is a homeomorphism $h' \colon T(X,\Aa) \to T(Y,\Cc)$ extending $f$ such that for every $A \in \Aa$ $h'[\{b_k^A \colon k \in \N\}] = \{d_k^{f[A]} \colon k \in \N\}$ and for every $A\in\Aa$ and $k\in\N$ the restrictions of $h'$ to $X_k^A$, $\hat I_k^A$, and $\tilde I_k^A$ are affine.
\end{prop}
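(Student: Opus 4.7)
The plan is to take the homeomorphism $h\colon X''\to Y''$ produced by Proposition \ref{extendss} and extend it across the two short whiskers attached at each apex. For each $A\in\Aa$ and $k\in\N$, write $h(b_k^A)=d_{l}^{C}$, where $C=f[A]\in\Cc$ and $l=l(A,k)\in\N$ is determined by the back-and-forth construction inside $h$. Define $h'\colon T(X,\Aa)\to T(Y,\Cc)$ by $h'|_{X''}=h$, and on each pair of whiskers at $b_k^A$ by the affine bijections
\[
\hat I_k^A\to \hat I_l^C,\ b_k^A\mapsto d_l^C,\ \hat b_k^A\mapsto \hat d_l^C,\qquad \tilde I_k^A\to \tilde I_l^C,\ b_k^A\mapsto d_l^C,\ \tilde b_k^A\mapsto \tilde d_l^C.
\]
These definitions agree at the shared endpoint $b_k^A$, so $h'$ is a well-defined bijection that is affine on each $X_k^A$, $\hat I_k^A$ and $\tilde I_k^A$, extends $f$ (through $h$), and inherits from $h$ the apex-matching property $h'[\{b_k^A\colon k\in\N\}]=\{d_k^{f[A]}\colon k\in\N\}$.

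Since $T(X,\Aa)$ is compact and $T(Y,\Cc)$ is Hausdorff, it suffices to verify that $h'$ is continuous. On $X''$ the map agrees with $h$, and on the interior of each whisker it is affine, hence continuous. Each apex $b_k^A$ has a neighbourhood in $T(X,\Aa)$ contained in $X_k^A\cup \hat I_k^A\cup \tilde I_k^A$ (because the $e_{\langle n,k\rangle}$-coordinates separate distinct apexes), and on this union the three affine pieces have matching values at $b_k^A$, so $h'$ is continuous there. The only delicate case is a sequence coming from infinitely many whiskers and accumulating at a point of $X\times\{\zero\}$.

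Fix $(x,\zero)\in X\times\{\zero\}$ and a sequence $z_j\in \hat I_{k_j}^{A_{n_j}}\cup \tilde I_{k_j}^{A_{n_j}}$ with $z_j\to (x,\zero)$. From $\dd'(z_j,b_{k_j}^{A_{n_j}})\le 1/(2+\langle n_j,k_j\rangle)$ and the triangle inequality, $b_{k_j}^{A_{n_j}}\to (x,\zero)$, and continuity of $h$ yields
\[
d_{l_j}^{C_{m_j}}:=h(b_{k_j}^{A_{n_j}})\to (f(x),\zero).
\]
The second $\Q$-coordinate of $d_{l_j}^{C_{m_j}}$ is $e_{\langle m_j,l_j\rangle}$, and $e_n\to \zero$ only when $n\to\infty$, so $\langle m_j,l_j\rangle\to\infty$. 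Since $h'(z_j)$ lies in the whisker $\hat I_{l_j}^{C_{m_j}}\cup \tilde I_{l_j}^{C_{m_j}}$ at $d_{l_j}^{C_{m_j}}$, we get $\dd'(h'(z_j),d_{l_j}^{C_{m_j}})\le 1/(2+\langle m_j,l_j\rangle)\to 0$, so $h'(z_j)\to(f(x),\zero)=h'(x,\zero)$. A general convergent sequence in $T(X,\Aa)$ is split into a subsequence in $X''$ (continuity of $h$) and one in the whiskers (above), settling continuity everywhere.

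The only real obstacle is this last continuity check at $X\times\{\zero\}$. It goes through precisely because the whisker lengths $1/(2+\langle n,k\rangle)$ were chosen to vanish whenever the apex $b_k^A$ approaches $X\times\{\zero\}$, and the matching decay on the target side is guaranteed, in a compatible way, by the continuity of $h$ already established in Proposition \ref{extendss}.
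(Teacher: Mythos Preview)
Your proof is correct and follows exactly the same construction as the paper: take $h$ from Proposition~\ref{extendss} and extend it affinely over each pair of whiskers via $h'(tb_k^A+(1-t)\hat b_k^A)=td_l^C+(1-t)\hat d_l^C$ (and similarly for $\tilde I$), where $h(b_k^A)=d_l^C$. The paper's own proof is a single sentence stating this formula; you supply the continuity verification that the paper leaves implicit, and your argument for it is sound. One small presentational point: when you invoke the triangle inequality to get $b_{k_j}^{A_{n_j}}\to(x,\zero)$, you are tacitly using that $\langle n_j,k_j\rangle\to\infty$ (so that the whisker lengths go to $0$); this follows from the same second-coordinate observation you make explicitly on the target side, and it would be worth stating it on the source side too.
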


\begin{proof}
We simply extend the homeomorphism $h \colon X'' \to Y''$ constructed in the previous proposition to $h' \colon T(X,\Aa) \to T(Y,\Cc)$ by the formula $h'(tb_k^A + (1-t)\hat b_k^A) = td_l^C + (1-t)\hat d_l^C$ and $h'(tb_k^A + (1-t)\tilde b_k^A) = td_l^C + (1-t)\tilde d_l^C$, where $h(b_k^A)=d_l^C$.
\end{proof}

We will also need a variant of the space $T(X,\Aa)$. Consider a convex closed set $B \notin \Aa$. Consider the space $T(X,\Aa \cup \{B\})$. For every $k\in\N$ let $\check b_k^B$ be a point distinct from $\hat b_k^B, \tilde b_k^B$ with $\dd'(b_k^B, \check b_k^B)=\dd'(b_k^B, \hat b_k^B)$ and $\check b_k^B - b_k^B + a_k^B \in \Q \times \{\zero\}$. Denote the closed segment with endpoints $b_k^B$, $\check b_k^B$ by $\check I_k^B$. We define 
$$T'(X,B,\Aa) = T(X,\Aa \cup \{B\}) \cup \bigcup_{k\in\N}\check I_k^B,$$ 
that is, we attach an extra segment to the apex of every cone with base $B$, so removing the apex results in four connected components instead of three.


Clearly $T'(X,B,\Aa)$ is a locally connected continuum.

\begin{prop}\label{extendssss}
If $f \colon X \to Y$ is a homeomorphism such that $\{f[A] \colon A \in \Aa\} = \Cc$ and for every $A\in\Aa$ the restriction $f|_A$ is affine, and $B \subset X$, $D \subset Y$ are closed convex sets such that $f[B]=D$ and $f|_B$ is affine, then there is a homeomorphism $h'' \colon T'(X,B,\Aa) \to T'(Y,D,\Cc)$ extending $f$.
\end{prop}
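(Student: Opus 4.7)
The plan is to reduce the statement to Proposition~\ref{extendsss} applied to the enlarged families $\Aa \cup \{B\}$ and $\Cc \cup \{D\}$, and then extend the resulting homeomorphism across the additional segments $\check I_k^B$ by affine maps.

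First I would observe that $D \notin \Cc$: otherwise $D = f[A]$ for some $A \in \Aa$, and injectivity of $f$ together with $f[B]=D$ would give $A=B$, contradicting $B \notin \Aa$. Hence the hypotheses of Proposition~\ref{extendsss} are satisfied for the families $\Aa \cup \{B\}$ and $\Cc \cup \{D\}$ (the affineness of $f$ on the new members $B$ and $D$ is given by the assumption). This yields a homeomorphism $h' \colon T(X, \Aa \cup \{B\}) \to T(Y, \Cc \cup \{D\})$ extending $f$ and sending $\{b_k^B \colon k \in \N\}$ bijectively onto $\{d_k^D \colon k \in \N\}$. For each $k$ let $l(k) \in \N$ be the unique index with $h'(b_k^B) = d_{l(k)}^D$.

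Next I would define $h'' \colon T'(X,B,\Aa) \to T'(Y,D,\Cc)$ by letting $h''$ agree with $h'$ on $T(X, \Aa \cup \{B\})$ and, for each $k$, by sending $\check I_k^B$ affinely onto $\check I_{l(k)}^D$ via
$$h''(tb_k^B + (1-t)\check b_k^B) = t d_{l(k)}^D + (1-t) \check d_{l(k)}^D.$$
This is well defined, consistent with $h'$ at the apexes $b_k^B$, and a bijection between two compact spaces, so only continuity has to be verified.

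The main obstacle is continuity at points $(x,\zero) \in B \times \{\zero\}$, which may be approached by sequences lying in the newly attached segments. The decisive estimate, analogous to the one used in Proposition~\ref{extendss}, is that $\mathrm{diam}(\check I_k^B) = 1/(2+\langle n_B, k\rangle)$, where $n_B$ is the index of $B$ in $\Aa \cup \{B\}$, and this tends to $0$ as $k \to \infty$. Thus a sequence of points in $\bigcup_k \check I_k^B$ converging to $(x,\zero)$ forces the corresponding apexes $b_{k_j}^B$ to converge to $(x,\zero)$, so by continuity of $h'$ the images $d_{l(k_j)}^D$ converge to $(f(x),\zero)$. This in turn forces $\langle m_D, l(k_j) \rangle \to \infty$ (where $m_D$ is the index of $D$ in $\Cc \cup \{D\}$), hence $\mathrm{diam}(\check I_{l(k_j)}^D) \to 0$, so the $h''$-images of the original sequence also converge to $(f(x),\zero) = h''(x,\zero)$. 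Once this shrinking-diameter argument is in hand, continuity follows and the proposition is proved.
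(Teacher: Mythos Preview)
Your proposal is correct and follows exactly the paper's approach: apply Proposition~\ref{extendsss} to the enlarged families $\Aa\cup\{B\}$ and $\Cc\cup\{D\}$, then extend affinely over the extra segments $\check I_k^B$. The paper dispatches the homeomorphism check with a single ``clearly'', whereas you supply the shrinking-diameter continuity argument and the observation that $D\notin\Cc$; both additions are sound and make explicit what the paper leaves implicit.
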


\begin{proof}
Using Proposition \ref{extendsss} we get a homeomorphism $h' \colon T(X,\Aa \cup \{B\}) \to T(Y,\Cc \cup \{D\})$. We extend it by putting $h''(tb_k^B+(1-t)\check b_k^B)=td_l^D+(1-t)\check d_l^D$, where $h'(b_k^B)=d_l^D$. Then $h''$ clearly is a homeomorphism.
\end{proof}

\section{Homeomorphism relation of locally connected continua is complete}

In this section we will prove the main result. 

Recall that the space $\choq$ of metrizable Choquet simplices is a Borel subset of $K(\Q)$ and that the relation $\affine$ of affine homeomorphism on $\choq$ is complete (this is due to Sabok \cite{sabok}).

Consider a relation $\conggg$ introduced by Zielinski in \cite{zielinski} defined on the space $\{(X,R) \in K(\Q) \times K(Q^3) \colon R \subset X^3\}$, where $(X,R) \conggg (Y,S)$ if and only if there is a homeomorphism $f \colon X \to Y$ with $f^3[R]=S$. Here, $f^3$ means $f^3(x,y,z)=(f(x),f(y),f(z))$. Consider the map $\Gamma \colon \choq \to K(\Q^3)$ given by the formula
$$\Gamma(X) = \{ (x,y,z) \in X^3 \colon \frac 12 x + \frac 12 y = z \}.$$

The following is \cite[Proposition 2]{zielinski}
\begin{prop}\label{affine-conggg}
For every $X,Y\in\choq$ the following equivalence holds: $X \affine Y \iff (X, \Gamma(X)) \conggg (Y,\Gamma(Y))$.
\end{prop}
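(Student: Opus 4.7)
The plan is to prove the two implications separately; only one of them contains any content.

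The direction $(\Rightarrow)$ is a direct verification. If $f \colon X \to Y$ is an affine homeomorphism and $(x,y,z) \in \Gamma(X)$, so that $z = \tfrac{1}{2}x + \tfrac{1}{2}y$, then affinity of $f$ gives $f(z) = \tfrac{1}{2}f(x) + \tfrac{1}{2}f(y)$, hence $(f(x),f(y),f(z)) \in \Gamma(Y)$, so $f^3[\Gamma(X)] \subseteq \Gamma(Y)$; applying the same reasoning to $f^{-1}$ (also an affine homeomorphism) yields the opposite inclusion.

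For the direction $(\Leftarrow)$, I would start by extracting a functional equation. If $f \colon X \to Y$ is a homeomorphism with $f^3[\Gamma(X)] = \Gamma(Y)$, then for any $x,y \in X$ convexity of the Choquet simplex $X$ guarantees that $z := \tfrac{1}{2}x + \tfrac{1}{2}y$ lies in $X$, so $(x,y,z) \in \Gamma(X)$; the hypothesis forces $(f(x),f(y),f(z)) \in \Gamma(Y)$, i.e.\ $f\bigl(\tfrac{1}{2}x+\tfrac{1}{2}y\bigr) = \tfrac{1}{2}f(x)+\tfrac{1}{2}f(y)$. Thus $f$ preserves midpoints.

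The task is then reduced to the classical fact that a continuous midpoint-preserving map between convex subsets of a locally convex space is affine. The main step is an induction on $n$ showing that $f(tx+(1-t)y) = tf(x) + (1-t)f(y)$ for every dyadic $t=k/2^n \in [0,1]$ and all $x,y \in X$; in the inductive step one writes the point $\tfrac{k}{2^{n+1}}x + \bigl(1-\tfrac{k}{2^{n+1}}\bigr)y$ as the midpoint of two dyadic combinations of lower depth and applies the midpoint identity together with the induction hypothesis. Density of dyadics in $[0,1]$ and continuity of $f$ extend the identity to all $t \in [0,1]$, so $f$ is affine on every segment in $X$ and hence affine on $X$. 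Therefore $X \affine Y$. I do not anticipate any real obstacle: $(\Rightarrow)$ is pure unwinding of definitions, and $(\Leftarrow)$ is the standard Jensen-to-affine argument. The only points that deserve care are convexity of $X$ (so that midpoints stay inside $X$ and the functional equation is well-posed) and the remark that being affine on segments is equivalent to being affine on a convex set.
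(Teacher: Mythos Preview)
Your argument is correct. Note, however, that the paper does not supply its own proof of this proposition: it simply quotes it as \cite[Proposition 2]{zielinski}. What you have written is exactly the standard argument behind that citation --- the forward direction is immediate from the definition of $\Gamma$, and the backward direction is the classical ``midpoint-convex plus continuous implies affine'' fact applied on a compact convex set. There is nothing to add or correct.
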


Note that $\Gamma(X)$ is convex for every Choquet simplex $X$.


We recall another relation from \cite{zielinski}. Let $\perm$ be defined on $K(\Q)^\N$ in the following way: $(A_1, A_2, \ldots) \perm (B_1, B_2, \ldots)$ if and only if there exists a homeomorphism $h \colon \Q \to \Q$ and a permutation $\sigma$ of $\N$ such that $h(A_n)= B_{\sigma(n)}$ for any $n$. 

For a Choquet simplex $X$ consider the space $\tilde{X}=T(X,\{X\})$ and write $b_k$ instead of $b_k^X$. Define for any $n\in \N$, 
\begin{align*}
B_n & = \{b_n\} \times \tilde{X}^2 \\
C_n & = \tilde{X} \times \{b_n\} \times \tilde{X} \\
D_n & = \tilde{X}^2 \times  \{b_n\} \\
E_n & = B_n \cup C_n \\
F_n & = B_n \cap D_n.
\end{align*}

Let $\Psi \colon \choq \to K(\Q^3)^\N$ be the function 
$$\Psi(X)=(\tilde{X}^3, \Gamma(X), B_1, C_1, D_1, E_1, F_1, B_2, C_2, D_2, E_2, F_2, \ldots).$$


The proof of the following proposition is similar to the proof of \cite[Proposition 3]{zielinski}.

\begin{prop}\label{affine-perm}
For every $X,Y\in\choq$ the following equivalence holds: $X \affine Y \iff \Psi(X)\perm\Psi(Y)$.
\end{prop}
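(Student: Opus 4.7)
The plan is to adapt Zielinski's proof of \cite[Proposition 3]{zielinski}, using Proposition \ref{extendsss} as the extension mechanism.

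For the forward direction, assume $f \colon X \to Y$ is an affine homeomorphism. Apply Proposition \ref{extendsss} with $\Aa = \{X\}$ and $\Cc = \{Y\}$ to obtain a homeomorphism $h \colon \tilde X \to \tilde Y$ extending $f$ with $h[\{b_k : k \in \N\}] = \{d_k : k \in \N\}$, determining a permutation $\sigma$ of $\N$ via $h(b_n) = d_{\sigma(n)}$. Extend $h$ to an autohomeomorphism $\bar h$ of $\Q$ and set $H = \bar h \times \bar h \times \bar h$. Then $H[\tilde X^3] = \tilde Y^3$, $H[B_n] = B_{\sigma(n)}$ and analogously for $C_n, D_n, E_n, F_n$. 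Since $f$ is affine on $X$, $H$ preserves the defining equation of $\Gamma$, so $H[\Gamma(X)] = \Gamma(Y)$. The obvious permutation of the indices of $\Psi$, together with $H$, witnesses $\Psi(X) \perm \Psi(Y)$.

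For the converse, suppose $\Psi(X) \perm \Psi(Y)$ is witnessed by a homeomorphism $H$ of $\Q^3$ and a permutation of the indices. I would pin down how $H$ acts on the sequence in two stages. First, $\tilde X^3$ is the unique entry of $\Psi(X)$ containing every other entry, so $H[\tilde X^3] = \tilde Y^3$. Second, the identities $E_n = B_n \cup C_n$ and $F_n = B_n \cap D_n$ must be preserved by $H$; inspecting which triples of distinct entries of $\Psi(Y)$ satisfy analogous union and intersection identities shows that the permutation respects the grouping by $n$, i.e.\ there is a permutation $\sigma$ of $\N$ such that $H$ sends $\{B_n^X, C_n^X, D_n^X, E_n^X, F_n^X\}$ to $\{B_{\sigma(n)}^Y, C_{\sigma(n)}^Y, D_{\sigma(n)}^Y, E_{\sigma(n)}^Y, F_{\sigma(n)}^Y\}$ as an unordered family. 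The remaining entry $\Gamma(X)$ is then forced to map to $\Gamma(Y)$.

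The main obstacle is to convert this combinatorial matching into a single homeomorphism $\tilde h \colon \tilde X \to \tilde Y$ whose cube realises $H$ on $\tilde X^3$. Here one uses that $B_n$, $C_n$, $D_n$ are the first-, second-, and third-coordinate slices of $\tilde X^3$ through $b_n$, so that the triple intersections $B_n \cap C_m \cap D_p = \{(b_n, b_m, b_p)\}$ form a countable grid in $\tilde X^3$. Its image under $H$ is the corresponding grid in $\tilde Y^3$, after a permutation of coordinates that can be absorbed into $H$. This defines a bijection $\{b_n\} \to \{d_m\}$ which extends by continuity to $X \times \{\zero\}$ (contained in the closure of $\{b_n\}$), and then, by the affine structure on the cones $X_k^X$ and the segments $\hat I_k^X$, $\tilde I_k^X$, to a homeomorphism $\tilde h \colon \tilde X \to \tilde Y$ with $H = \tilde h \times \tilde h \times \tilde h$ on $\tilde X^3$. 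The restriction $f = \tilde h|_{X \times \{\zero\}}$ is a homeomorphism $X \to Y$, and the equality $H[\Gamma(X)] = \Gamma(Y)$ now reads as $f^3[\Gamma(X)] = \Gamma(Y)$; Proposition \ref{affine-conggg} then gives $X \affine Y$.
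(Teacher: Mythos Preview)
Your argument is essentially the paper's (both defer to Zielinski's proof of \cite[Proposition~3]{zielinski}, with Proposition~\ref{extendsss} replacing \cite[Proposition~1]{zielinski} in the forward direction), and the converse direction is handled the same way: pin down $H[\tilde X^3]=\tilde Y^3$ and $H[\Gamma(X)]=\Gamma(Y)$ by the combinatorics of the $B_n,\dots,F_n$, then use the grid $\{(b_n,b_m,b_p)\}$ to see that $H$ acts as a cube on the dense subset $(X')^3$, and finish with Proposition~\ref{affine-conggg}.

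One overreach to clean up: you assert that the extension $\tilde h\colon \tilde X\to\tilde Y$ satisfies $H=\tilde h\times\tilde h\times\tilde h$ on all of $\tilde X^3$. Nothing in the argument controls $H$ on the cones and segments of $\tilde X$, and in general $H$ need not be a product there. What your grid-plus-density argument actually gives is $H=\tilde h^3$ on $(X')^3$, hence on $X^3\supset\Gamma(X)$, and that is exactly what you use in the next sentence. So drop the extension of $\tilde h$ beyond $X'$ and the stronger claim on $\tilde X^3$; the proof then goes through verbatim and matches the paper's.
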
 

\begin{proof}


It follows from the proof of \cite[Proposition 3]{zielinski} (where instead of using \cite[Proposition 1]{zielinski} we use Proposition \ref{extendsss}) that $X \affine Y \implies \Psi(X) \perm \Psi(Y)$.

For the implication in the other direction, suppose that $\Psi(X) \perm \Psi(Y)$ witnessed by $\sigma \colon \N \to \N$ and $h\colon \tilde X^3 \to \colon \tilde Y^3$. Write 
$$\Psi(Y)=(\tilde{Y}^3, \Gamma(Y), H_1, I_1, J_1, K_1, L_1, H_2, I_2, J_2, \ldots)$$ 
and let $d_n\in\tilde{Y}$ be such that $H_n=\{d_n\}\times \tilde Y ^2$. Again, the proof of \cite[Proposition 3]{zielinski} shows that $h[\tilde X^3]=\tilde Y^3$, $h[\Gamma(X)]=\Gamma(Y)$, and that there is a permutation $\tau \colon \N \to \N$ with $h[B_n]=H_{\tau(n)}$, $h[C_n]=I_{\tau(n)}$, $h[D_n]=J_{\tau(n)}$, $h[E_n]=K_{\tau(n)}$, $h[F_n]=L_{\tau(n)}$. 


Denoting $X'=X \cup \{b_n \colon n\in\N\}$ and $Y'=Y \cup \{ d_n \colon n \in \N\}$ the proof of \cite[Proposition 3]{zielinski} shows that $(X,\Gamma(X)) \conggg (Y,\Gamma(Y))$. It follows from the Proposition \ref{affine-conggg} that $X\affine Y$.
\end{proof}

\bigskip

Let 
$$\X = \{(x,y) \in \Q^2 \colon \forall m\neq n, y_m=0 \vee y_n=0\}.$$
For every $(A_1, A_2, \ldots)\in K(\Q)^\N$ we define 
$$\Xi(A_1,A_2,\ldots) = \{(x,y) \in \X \colon \forall n \ y_n=0 \vee x \in A_n\}.$$
We identify $\Q$ with $\Q \times \{\zero\}$.

Recall the definition of the relation $\congg$ from \cite{zielinski}. If $A \subset B \subset X$ and $C \subset D \subset Y$ then $(X,B,A) \congg (Y,D,C)$ if and only if there exists a homeomorphism $f \colon X \to Y$ such that $f[A]=C$ and $f[B]=D$. 

The following is \cite[Proposition 4]{zielinski}.

\begin{prop}\label{extendsssss}
Let $\vec{A}=(A_1, A_2, \ldots)\in K(\Q)^\N$ and $\vec{B}=(B_1, B_2, \ldots) \in K(\Q)^\N$. Then 
$\vec{A} \perm \vec{B}$ if and only if $(\X,\Xi(\vec{A}), \Q) \congg (\X, \Xi(\vec{B}), \Q)$. Moreover, if $f \colon X \to Y$ and $\sigma \colon \N \to \N$ witness that $\vec{A} \perm \vec{B}$ then $f \times h_{\sigma^{-1}}|_{\X}$ witnesses that $(\X, \Xi(\vec{A}), \Q) \congg (\X, \Xi(\vec{B}), \Q)$, where $h_{\tau} \colon \Q \to \Q$ is the homeomorphism given by $h_{\tau}(x_1, x_2, \ldots) = (x_{\tau(1)}, x_{\tau(2)}, \ldots)$.
\end{prop}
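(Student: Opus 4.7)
The plan is to treat the two directions separately, making the ``moreover'' part essentially trivial and leaving the converse as the only substantive direction. For the forward direction, given $f\colon\Q\to\Q$ and a permutation $\sigma$ with $f[A_n]=B_{\sigma(n)}$, I take $\Phi:=f\times h_{\sigma^{-1}}|_{\X}$. Writing $L_n:=\{y\in\Q:y_m=0 \text{ for all } m\neq n\}$, one checks that $h_{\sigma^{-1}}$ sends $L_n$ to $L_{\sigma(n)}$ (since $h_{\sigma^{-1}}(e_n)=e_{\sigma(n)}$), so $\Phi$ maps $\X=\Q\times\bigcup_n L_n$ onto itself, fixes the copy $\Q\times\{\zero\}$, and takes $\Xi(\vec A)$ to $\Xi(\vec B)$: a point $(x,y)\in\Xi(\vec A)$ with $y_n>0$ forces $x\in A_n$, and its image $(f(x),h_{\sigma^{-1}}(y))$ has $f(x)\in B_{\sigma(n)}$ with $h_{\sigma^{-1}}(y)$ nonzero in the $\sigma(n)$-th coordinate. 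The reverse inclusion follows by applying the same argument to $f^{-1}$ and $\sigma^{-1}$.

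For the converse, suppose $\phi\colon\X\to\X$ is a homeomorphism with $\phi[\Q]=\Q$ and $\phi[\Xi(\vec A)]=\Xi(\vec B)$. I set $f:=\phi|_{\Q}$ and extract $\sigma$ purely from the ambient topology of $\X$. The key observation is that $\X\setminus\Q=\bigsqcup_n\Q\times(L_n\setminus\{\zero\})$, where each piece is a product of connected spaces and the pieces are pairwise disjoint; hence these are precisely the connected components of $\X\setminus\Q$. Any homeomorphism of $\X$ that fixes $\Q$ setwise must therefore permute them, yielding a permutation $\sigma$ of $\N$ with $\phi[\Q\times(L_n\setminus\{\zero\})]=\Q\times(L_{\sigma(n)}\setminus\{\zero\})$, and by taking closures $\phi[\Q\times L_n]=\Q\times L_{\sigma(n)}$.

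It then remains to check $f[A_n]=B_{\sigma(n)}$. For $a\in A_n$ the segment $\{a\}\times L_n$ lies in $\Xi(\vec A)$, so its $\phi$-image lies in $\Xi(\vec B)\cap(\Q\times L_{\sigma(n)})=(\Q\times\{\zero\})\cup(B_{\sigma(n)}\times L_{\sigma(n)})$. For $y\in L_n\setminus\{\zero\}$, $\phi(a,y)$ lies outside $\Q$ (since $\phi^{-1}[\Q]=\Q$), so its first coordinate belongs to $B_{\sigma(n)}$; letting $y\to\zero$ and using closedness of $B_{\sigma(n)}$ yields $f(a)\in B_{\sigma(n)}$, and the reverse inclusion follows by applying the same argument to $\phi^{-1}$. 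I expect the main conceptual step to be the component-permutation argument in the previous paragraph; once the sheets are identified as the connected components of $\X\setminus\Q$, everything else reduces to a direct continuity and closure computation.
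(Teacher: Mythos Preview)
The paper does not give its own proof of this proposition; it simply cites it as \cite[Proposition 4]{zielinski}. Your argument is correct and is essentially the standard one: the forward direction is a direct computation once you observe $h_{\sigma^{-1}}(e_n)=e_{\sigma(n)}$, and for the converse the identification of the sheets $\Q\times(L_n\setminus\{\zero\})$ as the connected components of $\X\setminus\Q$ (each is open in $\X\setminus\Q$, hence clopen, and connected) is exactly the right invariant to extract $\sigma$, after which the closure argument recovering $f[A_n]=B_{\sigma(n)}$ goes through as you wrote. There is nothing further to compare against in this paper.
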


\bigskip

Identifying $\Q^3$ with $\Q$ in an obvious way we may treat $\Psi(X)$ as an element of $K(\Q)^\N$. Therefore it makes sense to consider $\Xi(\Psi(X))$.
Note that $\Xi(\Psi(X))$ can be written as a union of a countable family of convex closed subsets of $\X$. Indeed, let $\Xx$ denote the family of cones and segments $\tilde X$ is the union of. Also let $[0,1]_n=\{te_n \colon t\in[0,1]\} \subset \Q$. Then $\Xi(\Psi(X))$ is the union of the following family:


\begin{align*}
\mathcal F_X & = \{\Q \times \{\zero\}\} \cup 
\{(S_1\times S_2 \times S_3) \times [0,1]_0 \colon S_1, S_2, S_3 \in \Xx \} \cup 
\{ \Gamma(X) \times [0,1]_1\} \\
& \quad \cup \bigcup_{n\in\N} \{ (\{b_n\} \times S_1 \times S_2) \times [0,1]_{5n+2} \colon S_1, S_2 \in\Xx\}  \\
& \quad \cup \bigcup_{n\in\N} \{ (S_1 \times \{b_n\} \times S_2) \times [0,1]_{5n+3} \colon S_1, S_2 \in\Xx\}  \\
& \quad \cup \bigcup_{n\in\N} \{ (S_1 \times S_2 \times \{b_n\}) \times [0,1]_{5n+4} \colon S_1, S_2 \in\Xx\}  \\
& \quad \cup \bigcup_{n\in\N} \{ (\{b_n\} \times S_1 \times S_2) \times [0,1]_{5n+5} \colon S_1, S_2 \in\Xx\}  \\
& \quad \cup \bigcup_{n\in\N} \{ (S_1 \times \{b_n\} \times S_2) \times [0,1]_{5n+5} \colon S_1, S_2 \in\Xx\}  \\
& \quad \cup \bigcup_{n\in\N} \{ (\{b_n\} \times S_1 \times \{b_n\}) \times [0,1]_{5n+6} \colon S_1, S_2 \in\Xx\}.
\end{align*}


The following proposition gives an explicit reduction of $\affine$ to the homeomorphism relation of locally connected continua. Borelness of this reduction follows from a routine verification. 
As a consequence we get Theorem \ref{main-result}.

\begin{prop}
The map $X \mapsto T'(\X, \Q, \mathcal F_X)$ is a reduction of $\affine$ to the homeomorphism relation of locally connected continua.
\end{prop}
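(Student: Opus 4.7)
The plan is to prove both directions of the reduction by chaining the propositions of Section~\ref{coding-spaces} through the intermediate relations $\perm$ on $K(\Q)^\N$ and $\congg$ on triples: the $T'$-construction should convert $\congg$ on $(\X,\Xi(\Psi(X)),\Q)$ into the homeomorphism relation via Proposition~\ref{extendssss}, Proposition~\ref{extendsssss} converts $\congg$ to $\perm$, and Proposition~\ref{affine-perm} converts $\perm$ to $\affine$.

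For the forward direction, suppose $X \affine Y$. First I would invoke Proposition~\ref{affine-perm} to obtain a witness $(\sigma,f)$ of $\Psi(X)\perm\Psi(Y)$, where the underlying $f \colon \Q \to \Q$ is constructed (via Proposition~\ref{extendsss}) from a homeomorphism $g \colon \tilde X \to \tilde Y$ that is affine on every cone $X_k^A$ and on every attached segment. Proposition~\ref{extendsssss} then produces a homeomorphism $F = f \times h_{\sigma^{-1}}|_\X \colon \X \to \X$ mapping $\Xi(\Psi(X))$ onto $\Xi(\Psi(Y))$ and $\Q$ onto $\Q$. Because each element of $\mathcal F_X$ is a product of factors from $\Xx$ (on which $g$ acts onto the corresponding factors of $\mathcal F_Y$) with an axis segment $[0,1]_n$ (permuted by $h_{\sigma^{-1}}$ according to $\sigma$), $F$ sends $\mathcal F_X$ onto $\mathcal F_Y$ member by member, is affine on each of them, and sends $\Q\times\{\zero\}$ onto itself affinely via $g^3$ restricted to the relevant subspace. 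A single application of Proposition~\ref{extendssss} then extends $F$ to a homeomorphism $T'(\X,\Q,\mathcal F_X) \to T'(\X,\Q,\mathcal F_Y)$.

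For the converse, given a homeomorphism $h \colon T'(\X,\Q,\mathcal F_X) \to T'(\X,\Q,\mathcal F_Y)$, the goal is to recover the data $(\X,\Xi(\Psi(X)),\Q)$ from $T'(\X,\Q,\mathcal F_X)$ purely topologically. The key local invariant is the branching number at apexes: for $A \in \mathcal F_X$ with $A \neq \Q\times\{\zero\}$, removing the apex $b_k^A$ from a small neighbourhood yields exactly three components (cone interior, $\hat I_k^A$, $\tilde I_k^A$), while removing $b_k^{\Q\times\{\zero\}}$ yields four because of the extra arc $\check I_k^{\Q\times\{\zero\}}$. Hence $h$ must send $\Q$-apexes to $\Q$-apexes and distinguishes them from the rest. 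Arguing as in Zielinski's proof, the base $\X$ is recoverable as the unique ``skeleton'' into which all cones and arcs collapse, and each member $A \in \mathcal F_X$ is recovered up to the permutation as the set of accumulation points of its apex sequence $\{b_k^A\}$ inside $\X$. This yields $(\X,\Xi(\Psi(X)),\Q) \congg (\X,\Xi(\Psi(Y)),\Q)$, hence $\Psi(X)\perm\Psi(Y)$ by Proposition~\ref{extendsssss}, and finally $X \affine Y$ by Proposition~\ref{affine-perm}.

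The main obstacle I anticipate is the topological reconstruction step in the backward direction: showing that the distinguished subsets of $T'(\X,\Q,\mathcal F_X)$ -- the base $\X$, each member of $\mathcal F_X$, and the set $\Q$ -- are intrinsically definable from the homeomorphism type. This requires careful bookkeeping with component counts of complements, arc-connectivity, and accumulation sets of branching points, essentially retracing Zielinski's structural analysis in this richer setting, with the added arcs $\check I_k^\Q$ providing the extra distinguishing feature of $\Q$. Borel measurability of the map $X \mapsto T'(\X,\Q,\mathcal F_X)$ is a routine verification, since every ingredient -- the affine enumerations, the cones, the attached segments, and the family $\mathcal F_X$ -- arises from Borel operations on the Polish hyperspace $K(\Q)$.
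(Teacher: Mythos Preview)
Your proposal follows the same architecture as the paper's proof: chain Propositions~\ref{affine-perm}, \ref{extendsssss}, and \ref{extendssss} for the forward direction, and for the converse use the apex branching count (three versus four components) as the topological invariant that distinguishes the $\Q$-cones from the $\mathcal F_X$-cones, then feed back through Propositions~\ref{extendsssss} and \ref{affine-perm}.

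One point where the paper is sharper than your outline: in the backward direction you propose to recover the individual members $A\in\mathcal F_X$ as accumulation sets of apex sequences. The paper does not attempt this, and indeed it is unnecessary. It suffices to recover only the three sets $\Q$, $\Xi(\Psi(X))=\bigcup\mathcal F_X$, and $\X$, since Proposition~\ref{extendsssss} already handles the passage from $(\X,\Xi(\Psi(X)),\Q)$ back to $\perm$. The paper does this directly: writing $S^X_k$ for the set of points whose removal from $T'(\X,\Q,\mathcal F_X)$ leaves exactly $k$ components, one has $\Q=\cl{S^X_4}\setminus S^X_4$ and $\Xi(\Psi(X))=\cl{S^X_3}\setminus S^X_3$, and the remaining points of $\X$ are characterised as those whose component in $T'(\X,\Q,\mathcal F_X)\setminus\Xi(\Psi(X))$ is disjoint from $S^X_3\cup S^X_4$. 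This replaces your informal ``skeleton'' recovery with three explicit homeomorphism-invariant definitions, and avoids the bookkeeping of sorting apexes into sequences.
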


\begin{proof}
Let $X,Y\in\choq$. Assume that $f \colon X \to Y$ is an affine homeomorphism. Using Propositions \ref{extendsss} and \ref{extendsssss} we conclude that there is a homeomorphism of $\X$ witnessing that $(\X, \Xi(\Psi(X)), \Q) \congg (\X, \Xi(\Psi(Y)),\Q)$ which is affine on every set from $\mathcal F_X$. It follows then by Proposition \ref{extendssss} that $T'(\X, \Q, \mathcal F_X)$ is homeomorphic to $T'(\X, \Q, \mathcal F_Y)$.

Conversely, assume that $T'(\X, \Q, \mathcal F_X)$ is homeomorphic to $T'(\X, \Q, \mathcal F_Y)$ and let $f$ be a homeomorphism witnessing that. Let $S^X_k \subset T'(\X, \Q, \mathcal F_X)$ be the set of points $x$ such that $T'(\X, \Q, \mathcal F_X) \setminus \{x\}$ consists of exactly $k$ connected components. Note that this property is preserved by homeomorphisms. By construction of $T'(\X, \Q, \mathcal F_X)$, the boundary of the set $S^X_4$ is $\Q$. Therefore, since $f[S_4^X] = S_4^Y$, the image of the boundary of $S_4^X$ is equal to the boundary of $S_4^Y$, i.e. $f[\Q]=\Q$. Analogously, by construction we know that the boundary of the set $S_3^X$ is $\bigcup \mathcal F_X=\Xi(\Psi(X))$. We conclude that $f[\Xi(\Psi(X))]=\Xi(\Psi(Y))$. 
The points $x\in\X \setminus \Xi(\Psi(X))$ are characterized by the following property: the connected component of $x$ in the space $T'(\X, \Q, \mathcal F_X)\setminus\Xi(\Psi(X))$ is disjoint from $S^X_3 \cup S_4^X$. Analogous statement holds for $Y$. It follows that $f[\X \setminus \Xi(\Psi(X))]=\X \setminus \Xi(\Psi(Y))$. Finally, 
\begin{multline*}
f[\X]=f[(\X \setminus \Xi(\Psi(X))) \cup \Xi(\Psi(X))]=f[\X \setminus \Xi(\Psi(X))]\cup f[\Xi(\Psi(X))] \\
= (\X \setminus \Xi(\Psi(Y))) \cup \Xi(\Psi(Y))=\X.
\end{multline*}
Therefore $(\X, \Xi(\Psi(X)),\Q) \congg (\X, \Xi(\Psi(Y)),\Q)$. By Proposition \ref{extendsssss}, $\Psi(X) \perm \Psi(Y)$. In view of Proposition \ref{affine-perm} this is equivalent to $X \affine Y$. This finishes the proof.
\end{proof}

\section*{Acknowledgements}

I am grateful to Marcin Sabok for many valuable discussions on the topic and helpful remarks concerning the early draft of the paper. Also thanks are due to Roman Pol for providing the reference to Kuratowski's result on locally connected continua.

\end{document}